\numberwithin{equation}{section}
\numberwithin{equation}{section}
\newtheorem{theorem}{Theorem}[section]
\newtheorem{corollary}[theorem]{Corollary}
\newtheorem{lemma}[theorem]{Lemma}
\newtheorem{proposition}[theorem]{Proposition}
\newtheorem{definition}{Definition}[section]
\newtheorem{remark}[theorem]{Remark}
\newtheorem{assumption}{Assumption}
\newcommand{\linV}{\vec{V}}
\newcommand{\linVp}{\vec{V}_p}
\newcommand{\linVd}{\vec{V}_d}
\newcommand{\lin}[1]{\vec{#1}}
\newcommand{\dist}{\operatorname{Dist}}
\newcommand{\gap}{\operatorname{Gap}}
\newcommand{\thetax}{\theta_p^\star}
\newcommand{\thetas}{\theta_d^\star}
\newcommand{\oneratio}{Limiting Error Ratio}
\newcommand{\eratio}{error ratio}
\newcommand{\LPsharp}{LP sharpness}
\newcommand{\ignore}[1]{}
\newcommand{\condL}{\mathcal{L}}
\newcommand{\eps}{\varepsilon}
\newcommand{\calE}{{\cal E}}
\newcommand{\calF}{{\cal F}}
\newcommand{\calL}{{\cal L}}
\newcommand{\calS}{{\cal S}}
\newcommand{\calX}{{\cal X}}
\title{On the Relation Between LP Sharpness and Limiting Error Ratio and Complexity Implications for Restarted PDHG}
\author{Zikai Xiong\thanks{MIT Operations Research Center, 77 Massachusetts Avenue, Cambridge, MA 02139, USA. 
\href{mailto:zikai@mit.edu}{zikai@mit.edu}.  Research supported by AFOSR Grant No. FA9550-22-1-0356.
}  
\and Robert M. Freund\thanks{MIT Sloan School of Management, 77 Massachusetts Avenue, Cambridge, MA 02139, USA. 
\href{mailto:rfreund@mit.edu}{rfreund@mit.edu}. Research supported by AFOSR Grant No. FA9550-22-1-0356.}}
\date{June 2025}
\begin{document}

\maketitle

\begin{abstract}
	
	There has been a recent surge in development of first-order methods (FOMs) for solving huge-scale linear programming (LP) problems.  The attractiveness of FOMs for LP stems in part from the fact that they avoid costly matrix factorization computation. However, the efficiency of FOMs is significantly influenced -- both in theory and in practice -- by certain instance-specific LP condition measures. Xiong and Freund recently showed that the performance of the restarted primal-dual hybrid gradient method (PDHG) is predominantly determined by two specific condition measures: \LPsharp~and \oneratio.  In this paper we examine the relationship between these two measures, particularly in the case when the optimal solution is unique (which is generic -- at least in theory), and we present an upper bound on the \oneratio~involving the reciprocal of the \LPsharp. This shows that in LP instances where there is a dual nondegenerate optimal solution, the computational complexity of restarted PDHG can be characterized solely in terms of \LPsharp~and the distance to optimal solutions, and simplifies the theoretical complexity upper bound of restarted PDHG for these instances.
\end{abstract}

\section{Introduction}\label{secpreliminaries}

In this work, we study linear programming (LP) problems in the following form:
\begin{equation}\label{pro primal LP}
	\begin{aligned}
	 	\underset{x\in\mathbb{R}^n}{\operatorname{min}} \ & \ c^\top x \\
 	\operatorname{s.t.} \ & \ x \in V_p, \ x \ge 0 
	\end{aligned}
\end{equation}
where $V_p$ is an affine subspace in $\mathbb{R}^n$. For common standard-form problems we have  $V_p = \{x \in \mathbb{R}^n: Ax = b\}$ for a given matrix $A\in\mathbb{R}^{m\times n}$ and a right-hand side vector $b\in\mathbb{R}^m$~\cite{bertsimas1997introduction}. Without loss of generality we assume that $c \in \lin{V}_p$ where $\linV_p$ is the linear subspace associated with $V_p$. This is without loss of generality since we can replace $c$ with its projection onto $\linV_p$ and maintain the same optimal solution set and optimal objective function value. We therefore include this condition in our set of assumptions about \eqref{pro primal LP} as follows:
\begin{assumption}\label{assump:general}
	The LP problem \eqref{pro primal LP} has an optimal solution, the objective vector $c$ is in the linear subspace $\lin{V}$, and non-optimal feasible solutions exist for \eqref{pro primal LP} and its dual problem. 
\end{assumption}

Under Assumption~\ref{assump:general}, the primal and dual problems can be written in the following symmetric format:
\begin{equation}\label{pro: primal dual reformulated LP}\tag{PD}
	\begin{aligned}
		&\text{(P)}& \quad\min_{x\in\mathbb{R}^n}& \  c^\top x & \quad\quad&\text{(D)}&\quad\max_{s\in\mathbb{R}^n}& \  - q^\top s \\
		&&\text{s.t.} & \ x \in \calF_p:= V_p\cap\mathbb{R}^n_+
		&\quad & &\text{s.t.} & \ s \in \calF_d:= V_d\cap\mathbb{R}^n_+ \\
		&&&\ \quad  \quad V_p:= q + \lin{V}_p& & & & \quad \quad \ V_d:= c + \lin{V}_d
	\end{aligned}
\end{equation}
where $\linVd$ is the linear subspace associated with the affine subspace $V_d$, and $\linVp$ and $\linVd$ are orthogonal complements, namely,
\begin{equation}
	\mathbb{R}^n = \linVp + \linV_d \quad \text{ and }\quad \linVp \ \bot \ \linV_d \ ,
\end{equation}
and
\begin{equation}\label{eq:norm_of_c_and_q}
	c \in \linVp \text{ and }c = \arg\min_{c_0 \in V_d} \|c_0\|;\quad \text{ also } \quad q \in \linVd \text{  and } q = \arg\min_{q_0\in V_p}\|q\| \ .
\end{equation}
This symmetric reformulation was first proposed in \cite{todd1990centered}. We will denote the primal and dual optimal solutions of \eqref{pro: primal dual reformulated LP}  as $\calX^\star$ and $\calS^\star$, respectively. The duality gap is
$$
\gap(x,s) :=  c^\top x + q^\top s \  ,
$$
and the primal-dual optimal solution set is
$$
\calX^\star\times \calS^\star :=
\left\{
(x,s)
\left|
x\in V_p, \ x \in \mathbb{R}^n_+, \  s\in V_d, \ s \in \mathbb{R}^n_+, \ \gap(x,s) \le 0
\right.
\right\} \ .
$$

Based on the symmetric primal-dual formulation \eqref{pro: primal dual reformulated LP}, \cite{xiong2023geometry} introduce two condition measures: \LPsharp~and \oneratio, and provide new computational guarantees for the restarted Primal-Dual Hybrid Gradient method (PDHG) involving these condition measures. Here we address the question of the relationship between \LPsharp~and \oneratio and whether the overall complexity proven in \cite{xiong2023geometry} can be simplified by replacing one these condition measures with an upper bound involving the other condition measure.  We will demonstrate the following results for the case when the optimal solution of \eqref{pro primal LP} is unique:
\begin{enumerate}
	\item The primal \oneratio~is upper-bounded by the reciprocal of the primal \LPsharp~multiplied by the relative distance to the dual optimal solutions.  Similarly, the dual \oneratio~is upper-bounded by the reciprocal of the dual \LPsharp~multiplied by the relative distance to the primal optimal solutions. However, we show that the reciprocal of the \LPsharp~is not upper-bounded by the \oneratio; this is shown by constructing a specific class of LP instances.
	\item The overall worst-case complexity of restarted PDHG can be expressed using only the \LPsharp~and the relative distance to optimal solutions. Similar to the results in \cite{xiong2023geometry}, the choice of the primal and dual step-sizes in PDHG can affect occurrence of certain cross-terms in the complexity bounds.
\end{enumerate}

We note that various condition numbers (measures) have been introduced for algorithms for LP with the aim of studying/explaining the theoretical and/or practical performance of algorithms for LP, including $\kappa(A)$ (the ratio of largest to smallest singular value of $A$ \cite{boyd2004convex}), the Hoffman constant \cite{hoffman2003approximate}, $\chi(A)$ and $\bar\chi(A)$ \cite{todd1990dantzig, stewart1989scaled}, and Renegar’s distance to ill-posedness \cite{renegar1994some}.  There are also a variety of geometry-focused  condition numbers involving characteristics of level-sets \cite{freund2003primal}, symmetry measures of convex bodies \cite{belloni2008symmetry, epelman2002new}, etc. For the use of these and other condition measures in the context of LP see \cite{vavasis1996primal, todd1990dantzig, epelman2002new} among others.


\paragraph{Notation.}
Throughout this paper, unless explicitly stated otherwise, $\|\cdot\|$ denotes the Euclidean norm, and we use $\| \cdot\|_1$ to denote the $\ell_1$ norm. For any $x \in \mathbb{R}^n$ and $\calX\subset \mathbb{R}^n$, we denote the Euclidean distance from $x$ to the set $\calX$ as $\dist(x,\calX):= \min_{\hat{x} \in \calX} \|x-\hat{x}\|$. The diameter of $\calX$ is denoted as $\operatorname{Diam}(\calX):=\max_{x,\hat{x}\in\calX}\|x - \hat{x}\|$.
For any matrix $A$,  $\sigma_{\max} ^+(A)$ and $\sigma_{\min}^+(A)$ denote the largest and smallest non-zero singular values of $A$, respectively. For an affine subspace $V$, let $\linV$ denote the associated linear subspace of $V$, whereby $V=\linV + v$ holds for every $v \in V$.
We use $\mathbb{R}^n_+$ and $\mathbb{R}^n_{++}$ to denote the nonnegative orthant and strictly positive orthant in $\mathbb{R}^n$, respectively.
For a linear subspace $\linV \subset \mathbb{R}^n$, $\linV^\bot$ denotes the orthogonal complement of $\linV$.  We use $e$ to denote the all-ones vector in $\mathbb{R}^n$, namely $e=(1, \ldots, 1)^\top$.  We use $\| \cdot\|_1$ to denote the $\ell_1$ norm.

\paragraph{Organization.}
This rest of this paper is structured as follows.
In Section \ref{sectwoconditionmeasures} we recall the definitions of the two condition measures introduced in \cite{xiong2023geometry}. In Section \ref{secrelation} we present our main result about the relationship between these two condition measures. 
In Section \ref{secPDHGcomplexity} we present an simplified version of the worst-case complexity analysis for restarted PDHG in \cite{xiong2023geometry} using the results from Section \ref{secrelation}.

\section{Two Geometry-based Condition Measures for LP}\label{sectwoconditionmeasures}

To study the computational guarantees for the restarted Primal-Dual Hybrid Gradient method (PDHG) , \cite{xiong2023geometry} has introduced two condition measures for LP problems \eqref{pro primal LP} that together play an important role in the performance of restarted PDHG both in theory and in practice. The two condition measures under consideration are the \oneratio~and \LPsharp. The first condition measure, \oneratio, is defined as follows, using the primal problem \eqref{pro primal LP} as an illustrative case:

\begin{definition}[\oneratio, Definition 1.1 of \cite{xiong2023geometry}]
	For any $x\in V_p\setminus \calF_p$, the \eratio~of $\calF_p$ at $x$ is defined as:
	\begin{equation}\label{eq:error_ratio}
		\theta(x):= \frac{\dist(x,\calF_p)}{\dist(x,\mathbb{R}^n_+)} \ , 
	\end{equation}
	and for any $x\in\calF_p$ we define $\theta(x):=1$. The \oneratio~is then defined as:
	\begin{equation}\label{eq:def_oner}
		\thetax :=  \lim_{\epsilon \to 0} \left(\sup_{x\in V_p, \dist(x,\calX^\star)\le \epsilon }\theta(x) \right).
	\end{equation}
\end{definition}

And of course we can similarly define \oneratio~for the dual problem in \eqref{pro: primal dual reformulated LP}. We will use $\thetax$ and $\thetas$ to denote the \oneratio~for the primal and dual problems in \eqref{pro: primal dual reformulated LP}.

The second condition measure, \LPsharp, is defined as follows, where again we use the primal problem \eqref{pro primal LP} as the illustrative case:
\begin{definition}[\LPsharp, Definition 1.1 of \cite{xiong2023geometry}]
	Let $f^\star$ denote the optimal objective function value of the problem \eqref{pro primal LP}, and define the hyperplane of the optimal objective value to be $H^\star:= \{x\in\mathbb{R}^n: c^\top x = f^\star\}$. The \LPsharp~is defined as follows: 
	\begin{equation}\label{eq_def_LPsharpness}
		\mu:= \inf_{x\in\calF_p\setminus \calX^\star} \frac{\dist(x,V_p\cap H^\star)}{\dist(x,\calX^\star)} \ .
	\end{equation} 
\end{definition}

Again we can similarly define \LPsharp~for the dual problem in \eqref{pro: primal dual reformulated LP}. For simplicity of notation we will use $\mu_p$ and $\mu_d$ to denote the \LPsharp~of the primal and dual problems in \eqref{pro: primal dual reformulated LP}. Under Assumption \ref{assump:general} the numerator of \eqref{eq_def_LPsharpness} has a closed form:
$$
\dist(x,V_p\cap H^\star) = \frac{c^\top x - f^\star}{\|c\|} = \frac{c^\top x - f^\star}{\dist(0,V_d)} \ ,
$$
where the second equality uses \eqref{eq:norm_of_c_and_q}. With this closed form, the following remark presents an alternative expression for the \LPsharp.

\begin{remark}
	Suppose that Assumption \ref{assump:general} holds. Then
	\begin{equation}\label{eq:equivalent_mu_p}
		\mu_p := \inf_{x \in \calF_p \setminus \calX^\star} \frac{c^\top x - f^\star}{\|c\| \cdot \dist(x,\calX^\star)} = \frac{1}{\dist(0,V_d)} \cdot \inf_{x \in \calF_p \setminus \calX^\star} \frac{c^\top x - f^\star}{ \dist(x,\calX^\star)} \ ,
	\end{equation}
	and
	\begin{equation}\label{eq:equivalent_mu_d}
		\mu_d := \inf_{s \in \calF_d \setminus \calS^\star} \frac{q^\top s + f^\star}{\|q\| \cdot \dist(s,\calS^\star)} = \frac{1}{\dist(0,V_p)} \cdot \inf_{s \in \calF_d \setminus \calS^\star} \frac{q^\top s + f^\star}{ \dist(s,\calS^\star)} \ .
	\end{equation}
\end{remark}
\cite{xiong2023geometry} uses the two measures \oneratio~and \LPsharp~to establish an overall complexity bound for restarted PDHG for LP problems \eqref{pro primal LP}. In the next section we will demonstrate that \oneratio~is upper-bounded by the reciprocal of \LPsharp~under the assumption that \eqref{pro primal LP} has a unique optimal solution.

\section{Relation Between \oneratio~and LP Sharpness}\label{secrelation}

In this section we present our main result that establishes an upper bound on \oneratio~using the reciprocal of \LPsharp~when the LP problem has a unique optimal solution.

\begin{theorem}\label{thm:main}
	For the primal  LP problem \eqref{pro primal LP}, suppose that Assumption \ref{assump:general} holds  and that the optimal solution is unique.  Then the following inequality holds:
	\begin{equation}\label{eq bound of thetax}
		\thetax  \le \frac{1}{\mu_p} \cdot \sqrt{n}  \cdot \left(
		\frac{\dist(0,\calS^\star)}{\dist(0,V_d)}
		+
		 \frac{\operatorname{Diam}(\calS^\star)}{\dist(0,V_d)}
		 \right) \ .
	\end{equation}
\end{theorem}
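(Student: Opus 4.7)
The plan is to exhibit, for each $x \in V_p$ sufficiently close to $x^\star$ with $x \notin \calF_p$, an explicit feasible point $\bar x \in \calF_p$ whose Euclidean distance from $x$ is bounded by $\dist(x,\mathbb{R}^n_+)$ times the right-hand side of the claimed inequality. Set $B := \{i : x^\star_i > 0\}$, $N := \{1,\dots,n\} \setminus B$, and $N^- := \{i \in N : x_i < 0\}$. For $x$ close enough to $x^\star$ the coordinates $x_j$ with $j \in B$ are strictly positive, so $\dist(x,\mathbb{R}^n_+)^2 = \sum_{i \in N^-} x_i^2 =: \|x_-\|^2$. Uniqueness of $x^\star$ implies $\calS^\star$ is bounded with $\max_{s \in \calS^\star} \|s\| \le \dist(0,\calS^\star) + \Diam(\calS^\star)$, and complementary slackness gives $s_j = 0$ for every $s \in \calS^\star$ and every $j \in B$.

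The core of the proof is a family of feasible directions $\{d^{(i)}\}_{i \in N^-} \subset \linVp$, one per coordinate that must be corrected. For each $i \in N^-$ I would first show existence of a dual optimum $s^{(i)} \in \calS^\star$ with $s^{(i)}_i > 0$ (strict complementarity at $i$): if every $s \in \calS^\star$ had $s_i = 0$, then by LP duality the auxiliary problem $\max\{x_i : x \in \calF_p,\ c^\top x = f^\star\}$ would attain a strictly positive value, producing a primal optimum with $x_i > 0$ and contradicting uniqueness. I would then construct $d^{(i)} \in \linVp$ satisfying (a) $d^{(i)}_j \ge 0$ for all $j \in N$, (b) $d^{(i)}_i = 1$, and (c) $d^{(i)}_j = 0$ for $j \in \operatorname{supp}(s^{(i)}) \setminus \{i\}$. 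In the primal non-degenerate case this is just the edge direction with $d^{(i)}_N = e_i$ and $d^{(i)}_B$ determined by $d^{(i)} \in \linVp$; in general one picks a dual-vertex basis compatible with $s^{(i)}$ whose associated primal edge pivot entering variable $i$ is primal-feasible.

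Applying \LPsharp\ along the ray $x^\star + t d^{(i)}$ for small $t>0$ (which lies in $\calF_p$ by (a)) and letting $t \to 0^+$ gives $c^\top d^{(i)} \ge \mu_p \|c\|\,\|d^{(i)}\|$. Since $d^{(i)} \in \linVp$ and $s^{(i)} - c \in \linVd$, one has $c^\top d^{(i)} = s^{(i)\top} d^{(i)}$, and by (b), (c) and $s^{(i)}_B = 0$ this right-hand side collapses to $s^{(i)}_i$, yielding $\|d^{(i)}\| \le s^{(i)}_i/(\mu_p\|c\|) \le \|s^{(i)}\|/(\mu_p\|c\|)$. Now set $\bar x := x + \sum_{i \in N^-} |x_i|\,d^{(i)}$. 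Then $\bar x \in V_p$; for $j \in N^-$, property (b) at $i=j$ gives $\bar x_j \ge x_j + |x_j| = 0$; for $j \in N \setminus N^-$, property (a) gives $\bar x_j \ge x_j \ge 0$; and continuity gives $\bar x_j > 0$ for $j \in B$ when $x$ is close enough to $x^\star$. Hence $\bar x \in \calF_p$, and the triangle inequality combined with $\|v\|_1 \le \sqrt{n}\,\|v\|_2$ gives
\[
\|\bar x - x\| \;\le\; \sum_{i \in N^-} |x_i|\,\|d^{(i)}\| \;\le\; \|x_-\|_1 \cdot \max_i \|d^{(i)}\| \;\le\; \sqrt{n}\,\|x_-\| \cdot \frac{\dist(0,\calS^\star) + \Diam(\calS^\star)}{\mu_p \|c\|}.
\]
Dividing by $\|x_-\|$, using $\|c\| = \dist(0,V_d)$, and passing to the limit $x \to x^\star$ yields the claimed bound on $\thetax$.

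The main obstacle is Step 2 in the primal-degenerate case: the projection of $\linVp$ onto its $N$-coordinates is then a proper subspace of $\mathbb{R}^{|N|}$, so one cannot simply set $d^{(i)}_N = e_i$, and not every dual-vertex basis produces a primal-feasible edge direction at $x^\star$ (the polyhedral analog of simplex cycling). One resolves this by a basis-selection argument: since strict complementarity at $i$ guarantees that entering variable $i$ is a legal pivot from $x^\star$, a Bland-style simplex argument provides a dual vertex $s^{(i)}$ with $s^{(i)}_i > 0$ whose complementary basis yields the desired primal-feasible $d^{(i)}$. The remaining steps (the \LPsharp\ collapse of $s^{(i)\top}d^{(i)}$ to $s^{(i)}_i$ and the $\ell_1$-to-$\ell_2$ inflation producing the $\sqrt{n}$ factor) are routine once the directions are in hand.
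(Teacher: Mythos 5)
Your route is genuinely different from the paper's (which never touches individual coordinates: it bounds $\thetax$ by $\|x^\star-\bar x\|/r_\delta$ for a most-interior $\delta$-optimal point, then invokes the level-set duality bound $R_\delta\, r_\delta \ge \delta$ of Freund--Vera together with \emph{both} sharpness constants $\mu_p$ and $\mu_d$, and lets $\delta\to 0$). Your coordinate-wise correction scheme is attractive and, if completed, would even avoid the detour through $\mu_d$. However, the step you yourself flag as the main obstacle is a genuine gap, and the justification you offer for it is false. First, "uniqueness of $x^\star$ implies $\calS^\star$ is bounded" is not true: take $V_p=\{x\in\mathbb{R}^4: x_2+x_3=0,\ x_1+x_4=1\}$ with $c\propto(-1,0,0,1)$; then $x^\star=(1,0,0,0)$ is the unique optimum but $\calS^\star=\{(0,\beta,\beta,\sqrt2):\beta\ge0\}$ is unbounded. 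Second, in that same instance the index $i=2$ satisfies strict complementarity (some $s\in\calS^\star$ has $s_2>0$), yet \emph{no} $d\in\linVp$ with $d_2=1$ and $d_3\ge0$ exists, since $\linVp$ forces $d_3=-d_2$. So "strict complementarity at $i$ guarantees that entering variable $i$ is a legal pivot from $x^\star$" is simply wrong, and no Bland-style basis selection can produce a direction satisfying your conditions (a)--(c) there. The set $\{d\in\linVp: d_i=1,\ d_N\ge0\}$ can be empty.

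The gap is repairable, but by a different mechanism than the one you sketch. When $\calS^\star$ is unbounded the theorem is vacuous ($\Diam(\calS^\star)=+\infty$), so you may assume it is bounded; by a theorem-of-the-alternative argument, boundedness of $\calS^\star=\{s\in V_d: s\ge0,\ s_B=0\}$ is exactly what guarantees some $\hat d\in\linVp$ with $\hat d_N>0$, hence nonemptiness of your direction-finding set for every $i\in N$. Then, rather than hunting for a compatible basis, apply LP duality directly to the auxiliary program $v_i:=\min\{c^\top d:\ d\in\linVp,\ d_i=1,\ d_j\ge0\ (j\in N\setminus\{i\})\}$: its dual is $\max\{s_i:\ s\in V_d,\ s_B=0,\ s_{N\setminus\{i\}}\ge0\}$, whose optimal value is $\max_{s\in\calS^\star}s_i>0$, and complementary slackness between the two optimal solutions hands you exactly your condition (c). This yields $c^\top d^{(i)}=v_i\le\max_{s\in\calS^\star}\|s\|$, after which your sharpness step, the feasibility of $\bar x$, and the $\ell_1$-to-$\ell_2$ inflation go through as written. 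As submitted, though, the proof is incomplete at its central step and rests on an incorrect claim about feasible pivots.
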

Examining the two right-most fractions in the right-hand side of \eqref{eq bound of thetax} we can interpret  
$\frac{\dist(0,\calS^\star)}{\dist(0,V_d)}$ as the relative distance (from $0$) to the dual optima set, and we can interpret 
$\frac{\operatorname{Diam}(\calS^\star)}{\dist(0,V_d)}$ as the relative diameter of the dual optimal set, because both of these quantities are (re-)scaled by the distance to the corresponding affine subspace $V_d$. Also observe that the sum of the distance to optima and the diameter is both lower-bounded and upper-bounded by the maximum-norm dual optimal solution, namely 
$$
\max_{s^\star \in \calS} \|s^\star\| \le \dist(0,\calS^\star) +  \operatorname{Diam}(\calS^\star) \ , $$
and also $$
\max_{s^\star \in \calS} \|s^\star\|  \ge \dist(0,\calS^\star) 
$$
and 
$$
 \operatorname{Diam}(\calS^\star) = \max_{u,v\in\calS^\star}\|u-v\| \le 2\cdot \max_{s^\star \in \calS} \|s^\star\|  \ ,
$$ which together imply that 
$$
\max_{s^\star \in \calS} \|s^\star\| \le \dist(0,\calS^\star) +  \operatorname{Diam}(\calS^\star) \le 3 \max_{s^\star \in \calS} \|s^\star\| \ . $$
Overall, the above theorem guarantees that when the primal problem has a unique optimal solution, the value of the primal \oneratio~$\thetax$ cannot be large, provided that the reciprocal of the primal sharpness $\mu_p$ and the (relative) maximum norm of dual optimal solutions are both small.  And of course a  corresponding result also holds for the dual problem.

\begin{corollary}\label{cor bound thetas}
	For the dual problem  in \eqref{pro: primal dual reformulated LP}, suppose that Assumption \ref{assump:general} holds and that the optimal dual solution is unique.  Then the following inequality holds:
	\begin{equation}\label{eq bound of thetas}
		\thetas  \le \frac{1}{\mu_d}\cdot  \sqrt{n}  \cdot \left(
		\frac{\dist(0,\calX^\star)}{\dist(0,V_p)}
		+
		\frac{\operatorname{Diam}(\calX^\star)}{\dist(0,V_p)}
		\right) \ .
	\end{equation}
\end{corollary}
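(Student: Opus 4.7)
The plan is to obtain Corollary \ref{cor bound thetas} as an immediate consequence of Theorem \ref{thm:main} by exploiting the fully symmetric structure of the reformulation \eqref{pro: primal dual reformulated LP}. The key observation is that the dual problem in \eqref{pro: primal dual reformulated LP}, after negating its objective, has precisely the same form as the primal problem \eqref{pro primal LP}: minimize $q^\top s$ subject to $s \in V_d \cap \mathbb{R}^n_+$, where $V_d = c + \linVd$ is an affine subspace of $\mathbb{R}^n$.

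First, I would verify that this ``recast'' problem (playing the role of \eqref{pro primal LP}) satisfies Assumption~\ref{assump:general}. The relation $q \in \linVd$ holds by \eqref{eq:norm_of_c_and_q}, so the objective vector lies in the associated linear subspace of $V_d$. The existence of optimal solutions holds by strong LP duality together with Assumption~\ref{assump:general} applied to the original pair, and the existence of non-optimal feasible solutions in both the recast primal and its own dual is simply Assumption~\ref{assump:general} with the primal and dual swapped.

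Next, I would transcribe the dictionary of substitutions: in going from \eqref{pro primal LP} to the recast problem, we replace $(c, V_p, \linVp, V_d, \linVd, \calX^\star, \calS^\star, \mu_p, \thetax)$ by $(q, V_d, \linVd, V_p, \linVp, \calS^\star, \calX^\star, \mu_d, \thetas)$. This substitution is internally consistent: the definitions of \oneratio~in \eqref{eq:def_oner} and of \LPsharp~in \eqref{eq_def_LPsharpness} produce exactly $\thetas$ and $\mu_d$ when applied to the recast problem, and the orthogonal-complement relation $\linVp \bot \linVd$ is symmetric in the two subspaces.

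Finally, I would invoke Theorem \ref{thm:main} on the recast problem, whose unique optimum is $\calS^\star$ by the hypothesis of the corollary. The conclusion \eqref{eq bound of thetax} then reads, after applying the dictionary,
\begin{equation*}
\thetas \le \frac{1}{\mu_d} \cdot \sqrt{n} \cdot \left( \frac{\dist(0,\calX^\star)}{\dist(0,V_p)} + \frac{\operatorname{Diam}(\calX^\star)}{\dist(0,V_p)} \right),
\end{equation*}
which is exactly \eqref{eq bound of thetas}. There is no real obstacle here beyond careful bookkeeping to confirm that every ingredient of Theorem~\ref{thm:main}---the assumption, the uniqueness, and the definitions of the two condition measures---transfers cleanly under the primal/dual swap; no new technical argument is required.
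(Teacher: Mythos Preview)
Your proposal is correct and matches the paper's approach: the paper does not give a separate proof of Corollary~\ref{cor bound thetas}, but simply states it as the dual counterpart of Theorem~\ref{thm:main}, relying on the symmetry of the formulation~\eqref{pro: primal dual reformulated LP}. Your explicit dictionary of substitutions and verification of Assumption~\ref{assump:general} for the recast problem is exactly the bookkeeping that justifies this symmetry.
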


Towards the proof of Theorem \ref{thm:main} we first review two useful results from \cite{xiong2023geometry} The first result is Proposition 2.6 in \cite{xiong2023geometry} which we re-state as follows:

\begin{proposition}[Proposition 2.6 in \cite{xiong2023geometry}]\label{lm upperbound}
	Let $x^\star \in \calX^\star$ and suppose that $\calX^\star \subset\{x:\|x - x^\star\| \le R\}$ for some $R >0$. Then it holds that $\thetax \le B^\star$, where $B^\star$ is defined as follows:
	\begin{equation}
		\begin{aligned}
			B^\star := \inf_{r > 0 , \bar{x} \in \mathbb{R}^n} & \ \frac{R + \| \bar{x} - x^\star\| }{r} \\
			\text{s.t.} \quad & \ \bar{x} \in V_p, \ \bar{x}\ge r \cdot e \ . 
		\end{aligned}
	\end{equation}
\end{proposition}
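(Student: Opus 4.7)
First I would reduce the theorem to showing that for every feasible pair $(r, \bar{x})$ for the problem defining $B^\star$, one has $\thetax \le (R + \|\bar{x} - x^\star\|)/r$; taking the infimum over such pairs then gives the claim. The central device is a convex-combination ``repair'' of a slightly infeasible point. Given $x \in V_p \setminus \calF_p$, I set $\hat{x} = (1-\lambda)x + \lambda \bar{x}$; since $x, \bar{x} \in V_p$ and $V_p$ is affine, $\hat{x} \in V_p$ automatically, and it remains only to pick $\lambda \in (0,1)$ so that $\hat{x} \ge 0$.

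For the choice of $\lambda$, let $x^- \in \mathbb{R}^n_+$ denote the coordinate-wise negative part of $x$, so that $\dist(x, \mathbb{R}^n_+) = \|x^-\|$. Since $\bar{x}_i \ge r$, we have $\hat{x}_i \ge (1-\lambda)x_i + \lambda r$, and thus $\hat{x}_i \ge 0$ is secured whenever $\lambda r \ge (1-\lambda)(x^-)_i$; taking $\lambda := \|x^-\|_\infty / (r + \|x^-\|_\infty)$ makes this hold with equality in the worst coordinate. Then $\hat{x} \in \calF_p$, so $\dist(x, \calF_p) \le \|x - \hat{x}\| = \lambda \|x - \bar{x}\|$. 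Dividing by $\|x^-\|$ and combining the elementary norm comparison $\|x^-\|_\infty \le \|x^-\|$ with $r \le r + \|x^-\|_\infty$ collapses the resulting expression to the clean, dimension-free estimate $\theta(x) \le \|x - \bar{x}\|/r$.

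Finally, let $p(x) \in \calX^\star$ realize $\dist(x, \calX^\star)$; the hypothesis $\calX^\star \subset \{y: \|y - x^\star\| \le R\}$ gives $\|x^\star - p(x)\| \le R$, so the triangle inequality yields $\|x - \bar{x}\| \le \dist(x, \calX^\star) + R + \|\bar{x} - x^\star\|$. Substituting into the bound for $\theta(x)$, taking the supremum over $x \in V_p$ with $\dist(x, \calX^\star) \le \epsilon$, and sending $\epsilon \to 0$ eliminates the $\dist(x, \calX^\star)$ term and produces $\thetax \le (R + \|\bar{x} - x^\star\|)/r$. Infimizing over feasible $(r, \bar{x})$ finishes the proof. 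The trivial case of $x \in \calF_p$ inside the supremum, where $\theta(x) := 1$ by definition, is harmless because one can check $B^\star \ge 1$ directly from $\bar{x} \ge r e$ and $R \ge 0$.

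The main obstacle I anticipate is the careful interplay of the two norms at play: the feasibility ``budget'' is naturally measured in $\ell_\infty$ (since we must service the largest negative coordinate), whereas the denominator of $\theta(x)$ is Euclidean. Had the norm inequality $\|x^-\|_\infty \le \|x^-\|$ pointed the other way, a $\sqrt{n}$ factor would creep into $B^\star$; it is precisely this favorable direction that makes the bound dimension-free. Beyond this observation, everything is bookkeeping.
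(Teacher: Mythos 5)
Your argument is correct. Note that this paper only quotes the proposition from the reference and does not reprove it, so there is no in‑paper proof to compare against; your line‑segment repair --- mixing $x$ with the interior point $\bar{x}\ge r e$ using the weight $\lambda=\|x^-\|_\infty/(r+\|x^-\|_\infty)$, deducing $\theta(x)\le\|x-\bar{x}\|/r$, and then letting $\dist(x,\calX^\star)\to 0$ before infimizing over $(r,\bar{x})$ --- is the natural route and yields exactly the stated $B^\star$. The only loose end is the parenthetical claim that $B^\star\ge 1$ follows ``directly from $\bar{x}\ge re$ and $R\ge 0$'': as written this is not enough (take $\bar{x}=x^\star$ and $R<r$, which is conceivable if $x^\star$ were strictly positive). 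You also need that $x^\star$ touches the boundary of $\mathbb{R}^n_+$, which does hold here: if $x^\star>0$ then complementary slackness forces every dual optimal $s^\star$ to be $0$, hence $0\in V_d$ and, since $c\in\linVp\perp\linVd$, $c=0$, contradicting the existence of non-optimal feasible solutions in Assumption~\ref{assump:general}. With a coordinate $j$ satisfying $x^\star_j=0$ one gets $\|\bar{x}-x^\star\|\ge\bar{x}_j\ge r$, so $B^\star\ge (R+r)/r\ge 1$ and the degenerate case $\theta(x)=1$ is indeed harmless.
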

This proposition states that the \oneratio~cannot be excessively large when the LP problem has a strictly feasible solution that is neither too large nor too close to the boundary of $\mathbb{R}^n_+$. The second result is Theorem 2.1 of \cite{freund2003primal}, which presents a geometric relationship between the primal objective function level sets and the dual objective function level sets. To introduce this result we define two quantities:
\begin{equation}\label{def r delta}
	\begin{aligned}
		r_\delta := & \max_{x}  \quad \min_{i}\{x_i\}  \\ 
		& \ \ \operatorname{s.t.} \quad x \in \calF_p \\
		&  \quad \quad \quad \ \ c^\top x \le f^\star + \delta  \\
	\end{aligned}
\end{equation}
for any $\delta \ge 0$, and 
\begin{equation}\label{def R eps}
	\begin{aligned}
		R_\eps := & \max_{s}  \quad \| s \|_1 \\ 
		& \ \ \operatorname{s.t.} \quad s \in \calF_d \\
		&  \quad \quad \quad \ \ -q^\top s \ge f^\star - \eps  \\
	\end{aligned}
\end{equation}
for any $\eps \ge 0$. The quantity $r_\delta$ is the positivity of the most positive $x$ in the primal objective function $\delta$-optimal level set; or equivalently as the distance to the boundary of the nonnegative orthant of point in the $\delta$-optimal level set that is farthest from the boundary. The quantity $R_\eps$ can be interpreted as the norm of the maximum-norm point $s$ in the dual $\eps$-optimal level set. Using $r_\delta$ in \eqref{def r delta} and $R_\eps$ in \eqref{def R eps}, we re-state Theorem 2.1 of \cite{freund2003primal} as follows:
\begin{lemma}[Theorem 2.1 in \cite{freund2003primal}]\label{lm Rr lemma}
	Suppose that the optimal objective value $f^\star$ of \eqref{pro primal LP} is finite. If $R_\eps$ is positive and finite, then 
	\begin{equation}\label{eq Rr}
		\min\{\eps,\delta\} \le R_\eps\cdot r_\delta \le \eps + \delta \		 .
	\end{equation}
	Otherwise, $R_\eps = 0$ if and only if $r_\delta = + \infty$, and $R_\eps = +\infty$ if and only if $r_\delta = 0$.  
\end{lemma}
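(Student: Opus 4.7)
The plan is to split the proof into three parts: the upper bound $R_\eps r_\delta \le \eps+\delta$, the lower bound $R_\eps r_\delta \ge \min\{\eps,\delta\}$, and the two degenerate cases. The backbone throughout is the symmetric-formulation identity
\[
c^\top x + q^\top s \;=\; x^\top s \qquad \text{for all } x\in V_p,\ s\in V_d,
\]
which I would derive by writing $x=q+u$ and $s=c+w$ with $u\in\linVp$, $w\in\linVd$, and using $c\in\linVp$, $q\in\linVd$, $\linVp\perp\linVd$, $c^\top q=0$, $u^\top w=0$.

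For the upper bound, I would pick $\bar x$ attaining the max in $r_\delta$ and $\bar s$ attaining the max in $R_\eps$. Then $\bar x\ge r_\delta e$ and $\bar s\ge 0$ give $\bar x^\top \bar s \ge r_\delta \|\bar s\|_1 = r_\delta R_\eps$, while the two level-set constraints give $c^\top \bar x + q^\top \bar s \le (f^\star+\delta)+(-f^\star+\eps) = \delta+\eps$; the identity combines these.

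The lower bound is the main obstacle. The plan is to use LP duality on
\[
r_\delta \;=\; \max_{x,r}\{\, r \;:\; x-re\ge 0,\ x\in V_p,\ c^\top x\le f^\star+\delta\,\}
\]
with Lagrange multipliers $\lambda\ge 0$ on $x\ge re$, $y$ on $x\in V_p$, and $\alpha\ge 0$ on the objective cut. Stationarity in $x$ and $r$ yields $\lambda=A^\top y+\alpha c$ and $e^\top\lambda=1$. Setting $s:=\lambda/\alpha$ puts $s\in \calF_d$ and $\alpha=1/\|s\|_1$, and the identity $b^\top y = \alpha\, q^\top s$ (a second application of $\linVp\perp\linVd$ using $q\in V_p$ and $s\in V_d$) converts the dual objective into $(q^\top s+f^\star+\delta)/\|s\|_1$. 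Strong duality then gives
\[
r_\delta \;=\; \min_{s\in\calF_d,\, s\ne 0}\ \frac{q^\top s + f^\star + \delta}{\|s\|_1}.
\]
Let $\bar s$ attain this min and set $\gamma := q^\top \bar s + f^\star$; weak duality forces $\gamma\ge 0$, and the formula gives $r_\delta\|\bar s\|_1 = \gamma+\delta$. \emph{Case 1}: if $\gamma\le\eps$, then $\bar s$ is already feasible for the $R_\eps$-LP, so $R_\eps\ge\|\bar s\|_1$ and $r_\delta R_\eps\ge \gamma+\delta\ge\delta$. \emph{Case 2}: if $\gamma>\eps$, fix a dual optimum $s^\star$ (which exists by LP strong duality, since $f^\star$ is finite) and form $\bar s_t := t\bar s+(1-t)s^\star\in\calF_d$; it satisfies $q^\top\bar s_t+f^\star=t\gamma$, so $t=\eps/\gamma<1$ renders $\bar s_t$ feasible for the $R_\eps$-LP, giving $R_\eps\ge\|\bar s_t\|_1\ge(\eps/\gamma)\|\bar s\|_1$ and hence $r_\delta R_\eps \ge (\eps/\gamma)(\gamma+\delta)\ge\eps$. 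The two cases together yield $R_\eps r_\delta\ge\min\{\eps,\delta\}$.

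For the degenerate cases I would run a recession-direction analysis on the same strong-duality formula. $R_\eps=+\infty$ is equivalent to the existence of a nonzero $d\in \linVd$ with $d\ge 0$ and $q^\top d\le 0$; finiteness of $f^\star$ forces $q^\top d=0$, and substituting $s=s_0+td$ into the formula and sending $t\to\infty$ drives the quotient to $0$, hence $r_\delta\le 0$ and so $r_\delta=0$ (combined with $r_\delta\ge 0$ from the primal optimum). The symmetric argument, starting from the analogous formula for $R_\eps$ and using a primal recession direction $d\in\linVp$, $d\ge 0$, $c^\top d\le 0$, yields $r_\delta=+\infty\Leftrightarrow R_\eps=0$. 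The principal difficulty is the Case-2 convex-combination step: one must simultaneously restore $R_\eps$-feasibility at a controlled $t$, track the $\ell_1$-norm loss, and then recombine with $r_\delta\|\bar s\|_1 = \gamma+\delta$ to reach $\eps$; the rest of the argument is LP-duality bookkeeping.
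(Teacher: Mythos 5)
This lemma is imported by citation -- the paper gives no proof of it, simply restating Theorem 2.1 of \cite{freund2003primal} -- so there is no in-paper argument to compare against; your proposal must be judged against the original source, and it is essentially the standard argument there: the gap identity $x^\top s = c^\top x + q^\top s$ for $x\in V_p$, $s\in V_d$ (which you derive correctly from $c\in\linVp$, $q\in\linVd$, $\linVp\perp\linVd$) gives the upper bound in two lines, and LP duality applied to the $r_\delta$ program gives the representation $r_\delta=\inf_{s\in\calF_d\setminus\{0\}}(q^\top s+f^\star+\delta)/\|s\|_1$ from which the lower bound follows. Your two-case analysis is correct, and the step you flag as the ``principal difficulty'' is actually painless: since $\bar s\ge 0$ and $s^\star\ge 0$, the $\ell_1$ norm is linear on the nonnegative orthant, so $\|t\bar s+(1-t)s^\star\|_1\ge t\|\bar s\|_1$ with no loss to track.

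Two soft spots worth tightening. First, you assume the infimum in the duality formula is attained by some $\bar s$. The dual LP in $(y,\alpha)$ does attain its minimum, but possibly only on the $\alpha=0$ face (when $\linVd\cap\mathbb{R}^n_+$ contains nonzero directions $d$ with $q^\top d>0$), in which case no finite $\bar s=\lambda/\alpha$ realizes the value. This is harmless -- run your two cases on a minimizing sequence $s^k$ with $\gamma_k:=q^\top s^k+f^\star$ and pass to the limit, since each $k$ yields $R_\eps\cdot(\gamma_k+\delta)/\|s^k\|_1\ge\min\{\eps,\delta\}$ -- but as written the attainment claim is unjustified. Second, the degenerate cases ($R_\eps=0\Leftrightarrow r_\delta=+\infty$ and $R_\eps=+\infty\Leftrightarrow r_\delta=0$) are only sketched; the recession-direction idea is the right one, but you should note that finiteness of $f^\star$ together with $\calF_d\ne\emptyset$ (Assumption \ref{assump:general}) is what rules out $q^\top d<0$, and you should say what $R_\eps=0$ means when $0\notin\calF_d$ (the level set is then empty, not $\{0\}$). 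Neither issue is a wrong turn; both are patchable within your framework.
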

This result states, for instance, that the quantities $r_\delta$ and $R_\eps$ are inversely related to within a factor of $2$ in the case when $\delta = \eps$. We will use this result in our proof of Theorem \ref{thm:main}, which we now provide.

\begin{proof}[Proof of Theorem \ref{thm:main}]
	Note that the optimal primal solution is unique, so we will denote this optimal solution as $x^\star$. Then according to Proposition \ref{lm upperbound}, it holds for $\thetax$ that
\begin{equation}\label{def thetax}
	\thetax \le  \inf_{x_{int} \in (\calF_p)_{++}}  \frac{\|x^\star - x_{int}\|}{\min_i |(x_{int})_i|} \ .
\end{equation}
For any $\delta > 0$, let $\bar{x}$ be 
\begin{equation}\label{def xbar}
	\begin{aligned}
		 \bar{x} \in \arg& \max_{x}  \quad \min_{i}\{x_i\}  \\ 
		& \ \ \operatorname{s.t.} \quad x \in \calF_p \\
		&  \quad \quad \quad \ \ c^\top x \le f^\star + \delta  \\
	\end{aligned}
\end{equation}
and then because of \eqref{def thetax} it holds that
\begin{equation}\label{def G}
	\thetax \le  \frac{\|x^\star - \bar{x}\|}{r_\delta} \ .
\end{equation}

Now, due to Lemma \ref{lm Rr lemma}, 
\begin{equation}\label{bound r delta}
	\frac{1}{r_\delta} \le \frac{R_\delta}{\delta} \ .
\end{equation}
Note that for any $p\in\mathbb{R}^n$, there exists the relation between norm $\|\cdot\|_1$ and the Euclidean norm $\|\cdot\|$ that $\|p\|_1 \le \sqrt{n}\|p\|$, so
$$
	\begin{aligned}
		R_\delta \ = \ & \max_{s}  \quad \| s \|_1  \quad \quad \quad \quad \quad \quad \le\ \  \max_{s} \ \ \sqrt{n}\cdot \|s\|\\ 
		& \ \ \operatorname{s.t.} \quad s \in \calF_d  \quad \quad \quad \quad   \quad \quad  \quad \operatorname{s.t.} \quad s \in \calF_d \\
		&  \quad \quad \quad \ \ -q^\top s \ge f^\star - \eps  \quad \quad \quad \quad \quad \ \ -q^\top s \ge f^\star - \eps  \\
	\end{aligned}
$$
which further implies
$$
\begin{aligned}
R_\delta & \le \sqrt{n} \cdot \left( \max_{s\in\calF_d, \ -q^\top s \ge f^\star - \delta} ~ \dist(s,\calS^\star) + \dist(0,\calS^\star) + \operatorname{Diam}(\calS^\star) \right) , \\
	& \ \le \sqrt{n} \cdot \left( \frac{\delta}{\|q\| \cdot \mu_d} + \dist(0,\calS^\star) + \operatorname{Diam}(\calS^\star) \right) \ .
\end{aligned} 
$$
Here the second inequality is due to the definition of the \LPsharp~$\mu_d$. And furthermore, $\|q\| = \dist(0,V_p)$ since \eqref{eq:norm_of_c_and_q}.
Now substitute the above inequality back to \eqref{bound r delta} and then  \eqref{def G} becomes
\begin{equation}\label{bound thetax}
		\thetax \le \|x^\star - \bar{x}\| \cdot \frac{\sqrt{n}}{\delta}\cdot \left(\frac{\delta}{\dist(0,V_p) \cdot \mu_d} + \dist(0,\calS^\star) + \operatorname{Diam}(\calS^\star)\right) \ .
\end{equation}

Similarly, since $c^\top \bar{x} - c^\top x^\star \le \delta$, according to the definition of the \LPsharp~$\mu_p$, it holds that:
\begin{equation}\label{bound dist to optimal}
		\|x^\star - \bar{x}\|  = \dist(\bar{x},\calX^\star)  \le \frac{\delta}{\|P_{\lin{V_p}}(c)\| \cdot \mu_p } =  \frac{\delta}{\dist(0,V_d) \cdot \mu_p } \ .
\end{equation}
Substituting \eqref{bound dist to optimal} back to \eqref{bound thetax} yields:
\begin{equation}
	\thetax \le \frac{1}{\dist(0,V_d) \cdot \mu_p } \cdot \sqrt{n} \cdot \left(\frac{\delta}{\dist(0,V_p) \cdot \mu_d} + \dist(0,\calS^\star) + \operatorname{Diam}(\calS^\star)\right) \ .
\end{equation}
Note that the $\delta$ in the above inequality can be any positive scalar, so
$$
\thetax \le \frac{\sqrt{n}}{ \mu_p }  \cdot \left(
\frac{\dist(0,\calS^\star)}{\dist(0,V_d)}
+
\frac{\operatorname{Diam}(\calS^\star)}{\dist(0,V_d)}
\right) \ .
$$
This is exactly \eqref{eq bound of thetax}. Now we have completed the proof.
\end{proof}

\subsection{The reciprocal of \LPsharp~cannot be upper-bounded by \oneratio}

In Theorem \ref{thm:main} we established an upper bound on the \oneratio~using the reciprocal of \LPsharp. In this subsection we provide an example where the reciprocal of \LPsharp~is huge but the \oneratio~and the distance to optimal solutions are both small. Consider the family of LP problems parameterized by $\gamma \in  [0,\pi/2)$ as follows:
$$
\begin{aligned}
	\mathrm{LP}_{\gamma}: \quad  & \min_{x \in \mathbb{R}^3} \ c_{\gamma}^\top x \quad 
	\\
	& \ \  \text{s.t.} \  \frac{\sqrt{3}}{3}\cdot \sum_{i=1}^3 x_i = 1  \ , \ x\ge 0 \ ,
\end{aligned}
$$ 
where $c_\gamma$ is defined as follows:
\begin{equation}
\begin{aligned}
	 c_{\gamma} :=   \cos(\gamma) \cdot \left(
	 \begin{array}{c}
	 	\frac{-1}{\sqrt{6}} \\
	 	 \frac{-1}{\sqrt{6}} \\
	 	  \frac{2}{\sqrt{6}}
	 \end{array}
	 \right)
	  + \sin(\gamma)  \cdot \left(
	  \begin{array}{c}
	  	 \frac{-1}{\sqrt{2}}\\
	  	 \frac{1}{\sqrt{2}} \\
	  	  0
	  \end{array}
	  \right) \ .
\end{aligned}
\end{equation}
For all $\gamma \in (0,\pi/2)$ the unique primal optimal solution is $x^\star = (\sqrt{3},0,0)$. However, when $\gamma = 0$ all points on the line segment connecting $(\sqrt{3},0,0)$ and $(0,\sqrt{3},0)$ are optimal solutions. Basically, as $\gamma \searrow 0$ the primal sharpness $\mu_p \searrow 0$ as well. We compute the \oneratio~and \LPsharp~for $\gamma \searrow 0$, which are shown in Figure \ref{fig:counterexample}. We see from Figure \ref{fig:counterexample} that $\mu_p$ decreases linearly in $O(\gamma)$ while $\mu_d$, $\thetax$, $\thetas$, and the primal and dual relative distances to optima all remain constant. This example shows that the reciprocal of \LPsharp~cannot be upper-bounded as a function of \oneratio~and the relative distance to optima.


\begin{figure}[htbp]
	\centering
	\includegraphics[width=1\linewidth]{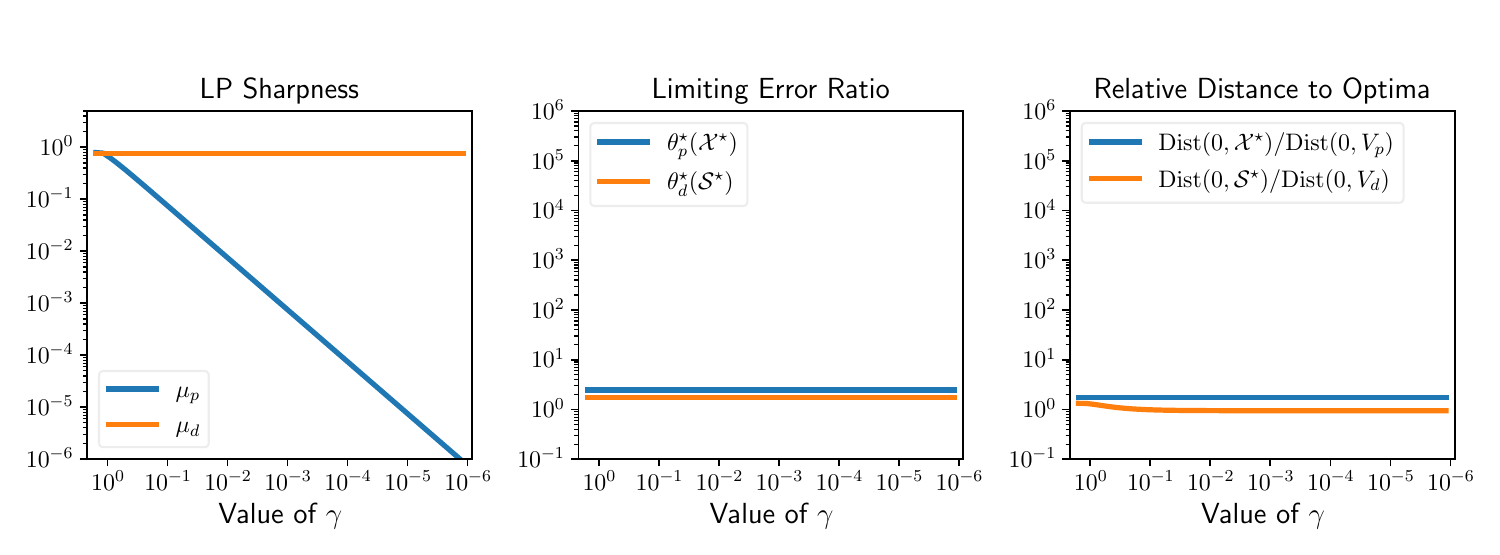}
	\caption{Condition measures of $\mathrm{LP}_\gamma$. }
	\label{fig:counterexample}
\end{figure}


\section{Complexity Implication in the Restarted PDHG}\label{secPDHGcomplexity}

In this section we show how Theorem \ref{thm:main} can be leveraged to yield a simplified complexity bound for restarted PDHG for LP. We first recall the complexity results in \cite{xiong2023geometry}. Let us assume that the affine subspace $V_p$ in \eqref{pro primal LP} is given by $\{x: Ax = b\}$ for a specific $A \in \mathbb{R}^{m\times n}$ and $b \in \mathbb{R}^m$. We define the following quantities:
\begin{equation}\label{eq  def lamdab min max}
	\lambda_{\max}:= \sigma_{\max}^+\left(A \right)\text{, }	\lambda_{\min}:= \sigma_{\min}^+\left(A \right), \text{ and }\kappa := \frac{\lambda_{\max}}{\lambda_{\min}}  \ ,
\end{equation}
and the measure of error of a primal-dual pair $(x,s)$ is the ``distance to optima'' error of $(x,s)$ given by:
$$
\calE_d(x,s) := \max \{ \dist(x,\calX^\star),\dist(s,\calS^\star)\} \ .
$$
The following is a re-statement of Theorem 3.1 of \cite{xiong2023geometry} whose result is for the ``standard'' step-sizes based on knowledge of $\lambda_{\min}$, $\lambda_{\max}$, $\|c\|$ and $\|q\|$ as follows.

\begin{lemma}[Theorem 3.1 of \cite{xiong2023geometry}]
	Suppose that Assumption \ref{assump:general} holds, and that the restarted PDHG (Algorithm 1 in \cite{xiong2023geometry}) is run starting from $z^{0,0} = (x^{0,0},y^{0,0} )= (0,0)$ using the $\beta$-restart condition with $\beta := 1/e$. Furthermore, let the step-sizes be chosen as follows:
	\begin{equation}\label{eq smart step size 1}
		\tau = \frac{\|q\|}{2\kappa\|c\|   } \ \ \text{ and } \ \ \sigma = \frac{\|c\|}{2\|q\|\lambda_{\max}\lambda_{\min}} \ .
	\end{equation}
	Let $T$ be the total number of PDHG iterations that are run in order to obtain a solution $(x,s)$ that satisfies $\calE_d(x,s) \le \eps$.  Then 
	\begin{equation}\label{eq overall complexity}
	T \ \le \ 5e \cdot  \condL \cdot \ln\Bigg(   
	8 e \cdot \condL \cdot \frac{\calE_d(x^{0,0},s^{0,0} )}{ \eps}\cdot  \Bigg(
	1+\kappa \frac{\|c\|}{\|q\|}
	\Bigg)
	\Bigg(
	1 + \frac{\|q\|}{\|c\|}
	\Bigg)  
	\Bigg) + 1\ ,
	\end{equation}
	where $\condL$ is defined as follows:
	\begin{equation}\label{eq easy L}
	\condL := 8.5 \kappa \left(
	\frac{1}{\mu_p} + \frac{1}{\mu_d} \right)
	\Bigg(  
	\thetax +\thetas   +  \frac{ \dist(0,\calX^\star) }{\dist(0, V_p)} +  \frac{\dist(c,\calS^\star)}{ \dist(0,V_d)} 
	\Bigg) \ .
\end{equation} 
\end{lemma}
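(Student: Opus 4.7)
The plan is to defer to the proof already given for Theorem~3.3 in \cite{xiong2023geometry}, of which the stated lemma is a verbatim re-statement. Nevertheless, were one to reconstruct the argument, the natural route is the standard three-step template for linearly convergent restarted first-order methods on sharp problems: (i) a sublinear ergodic convergence bound for plain PDHG, (ii) a conversion from the ergodic duality gap plus infeasibility residuals to the Euclidean distance-to-optima error $\calE_d$ using \LPsharp~and \oneratio, and (iii) a restart-accounting step that turns the resulting per-restart contraction into an overall linear rate.

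Concretely, I would first establish the Chambolle--Pock ergodic bound for vanilla PDHG on the saddle-point formulation of (PD): in the weighted metric $\|(x,s)\|_{\tau,\sigma}^2 := \|x\|^2/\tau + \|s\|^2/\sigma$, after $k$ PDHG steps the averaged iterate satisfies a primal-dual gap bound of order $\|z_0 - z^\star\|_{\tau,\sigma}^2 / k$ whenever $\tau\sigma\,\lambda_{\max}^2 \le 1$. The step-size choice \eqref{eq smart step size 1} is calibrated so that this weighted distance converts back to ordinary Euclidean distance with scale factors $(1+\kappa\|c\|/\|q\|)$ and $(1+\|q\|/\|c\|)$, which is exactly where the logarithmic factor in \eqref{eq overall complexity} originates. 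Next, I would translate the ergodic gap into a bound on $\calE_d(x,s)$ by invoking \eqref{eq:equivalent_mu_p} and \eqref{eq:equivalent_mu_d} to convert objective suboptimality into Euclidean distance-to-optima, and by using $\thetax$ and $\thetas$ to absorb the cost of decomposing an infeasible PDHG iterate into its projection onto $\calF_p$ (respectively $\calF_d$) plus an infeasibility residual. The terms $\dist(0,\calX^\star)/\dist(0,V_p)$ and $\dist(c,\calS^\star)/\dist(0,V_d)$ appearing in \eqref{eq easy L} arise when one relates the cold-start iterate $z^{0,0}=(0,0)$ to a nearby primal-dual optimum before applying the ergodic bound.

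Chaining $R$ restart periods then contracts $\calE_d$ by $\beta^R$, so $R = O(\ln(\calE_d(z^{0,0},s^{0,0})/\eps))$ restarts suffice; multiplying by the $O(\condL)$ iterations per restart yields \eqref{eq overall complexity}. The hard part is the per-restart contraction estimate that produces the explicit form of $\condL$ in \eqref{eq easy L}: one has to carefully propagate the weighted Chambolle--Pock distance through the ergodic bound, combine \LPsharp~and \oneratio~into a single inequality that contracts $\calE_d$ by the chosen factor $\beta = 1/e$, and track the cross-terms produced by the particular step-size split in \eqref{eq smart step size 1}. Since \cite{xiong2023geometry} has already carried out this bookkeeping in full, my proposal is simply to cite their Theorem~3.3 rather than repeat the calculation here.
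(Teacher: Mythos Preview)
Your proposal is correct and matches the paper's approach exactly: the paper itself gives no proof of this lemma, as it is simply a verbatim re-statement of Theorem~3.3 from \cite{xiong2023geometry}, so citing that reference is precisely what the paper does. Your additional sketch of the three-step restart argument is reasonable background but not required here.
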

In addition to the ``standard'' step-sizes described in \eqref{eq smart step size 1}, \cite{xiong2023geometry} also develops potentially better step-sizes that involve knowledge of \LPsharp, which we will call the ``optimized'' step-sizes. The complexity of using the ``optimized'' step-sizes is as follows.

\begin{lemma}[Theorem 3.2 in \cite{xiong2023geometry}]
The following choice of step-sizes:
\begin{equation}\label{eq smart step size 2}
	\tau = \frac{\mu_d\|q\|}{2\kappa\mu_p \|c\|   } \ \  \text{ and } \ \ \sigma = \frac{\mu_p\| c\|}{2\mu_d\|q\|\lambda_{\max}\lambda_{\min}} 
	\ 
\end{equation}leads to an alternative bound on the total number of PDHG iterations $T$:  
\begin{equation}\label{eq overall complexity2}
	T \ \le \ 5e \cdot  \condL \cdot \ln\Bigg(   
	8 e \cdot \condL \cdot \frac{\calE_d(x^{0,0},s^{0,0} )}{ \eps}\cdot  \Bigg(
	1+\kappa \frac{\mu_p\|c\|}{\mu_d\|q\|}
	\Bigg)
	\Bigg(
	1 + \frac{\mu_d\|q\|}{\mu_p\|c\|}
	\Bigg)  
	\Bigg) + 1\ ,
\end{equation}
using a structurally better value of the scalar $ \condL $, namely 
\begin{equation}\label{eq smart L}
	\condL  := 16 \kappa  \left(
	\frac{\thetax }{\mu_p}  +\frac{\thetas}{\mu_d } +
	\frac{ \dist(c,\calS^\star)}{\mu_p \cdot\dist(0,V_d)}    + 
	\frac{\dist(0,\calX^\star) }{\mu_d\cdot \dist(0, V_p)}
	\right)  \ .
\end{equation}
\end{lemma}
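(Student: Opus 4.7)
The plan is to follow the restart analysis framework of \cite{xiong2023geometry} that established the previous lemma for the standard step-sizes, and to carefully track how the new choice of step-sizes propagates through each constant. Since this statement is essentially Remark 3.4 of \cite{xiong2023geometry}, the shortest proof in the present paper is a citation; what follows sketches why the specific form of $\condL$ in \eqref{eq smart L} emerges.

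First I would verify that both \eqref{eq smart step size 1} and \eqref{eq smart step size 2} satisfy the same product identity $\tau\sigma = 1/(4\kappa\lambda_{\max}\lambda_{\min}) = 1/(4\lambda_{\max}^2)$, so the per-iteration PDHG contraction is unchanged; only the ratio $\tau/\sigma$ differs, being rescaled by a factor of $(\mu_d/\mu_p)^2$. The restarted PDHG analysis in \cite{xiong2023geometry} measures progress in a weighted norm of the form $\|x\|^2/\tau + \|s\|^2/\sigma$, so rebalancing the ratio effectively reweights primal deviations by $\mu_p/\mu_d$ and dual deviations by $\mu_d/\mu_p$ inside that norm.

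Next I would trace how this rescaling interacts with the sharpness-based bound on the restart radius. The primal sharpness $\mu_p$ contributes a factor $1/\mu_p$ multiplying the primal-associated quantities $\thetax$ and $\dist(c,\calS^\star)/\dist(0,V_d)$, while $\mu_d$ contributes $1/\mu_d$ multiplying the dual-associated quantities $\thetas$ and $\dist(0,\calX^\star)/\dist(0,V_p)$. Under the standard step-sizes the weighted norm treats the two sides symmetrically, producing the factor $(1/\mu_p + 1/\mu_d)$ in \eqref{eq easy L}; under the optimized step-sizes the asymmetric weighting selects precisely the appropriate sharpness term for each side, yielding the decoupled form of \eqref{eq smart L}. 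The logarithmic factor in \eqref{eq overall complexity2} changes correspondingly because the initial distance bound is measured in the new weighted norm, producing $\kappa \cdot \mu_p\|c\|/(\mu_d\|q\|)$ in place of $\kappa \cdot \|c\|/\|q\|$.

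The hard part will be bookkeeping rather than conceptual: one must confirm that the $\beta = 1/e$ restart criterion still triggers at the intended point under the new weighting and that no cross-terms (such as products of $1/\mu_p$ with $1/\mu_d$) are introduced that would spoil the clean separation into $\thetax/\mu_p$ and $\thetas/\mu_d$. Assuming these details carry through as in the proof of the previous lemma, the bound \eqref{eq smart L} follows, with the absolute constant inflating from $8.5$ to $16$ to absorb worst-case rebalancing factors.
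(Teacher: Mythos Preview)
Your proposal is correct, and indeed the paper itself offers no proof of this lemma at all: it is stated purely as a citation to Remark~3.4 of \cite{xiong2023geometry}, exactly as you recognized. Your sketch goes further than the paper does by outlining the mechanism---the preserved product $\tau\sigma = 1/(4\lambda_{\max}^2)$, the rescaled ratio $\tau/\sigma$ by $(\mu_d/\mu_p)^2$, and the resulting decoupling of primal and dual terms in the weighted norm---and this heuristic is sound and consistent with the structure of the bounds in \eqref{eq easy L} versus \eqref{eq smart L}.
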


It can be observed when considering the terms outside the logarithm, that the number of iterations of restarted PDHG is $O(\calL)$, where $\calL$ is defined in \eqref{eq easy L} or \eqref{eq smart L} depending on the step-sizes used. However, $\calL$ in \eqref{eq easy L} and \eqref{eq smart L} involves \LPsharp, \oneratio, and the relative distance to optima. According to Theorem \ref{thm:main} and Corollary \ref{cor bound thetas}, we can conclude that if both the primal and dual optimal solutions are unique, then:
\begin{equation}
	\thetax \le \frac{\sqrt{n}}{ \mu_p }  \cdot 
	\frac{\dist(0,\calS^\star)}{\dist(0,V_d)} \quad \text{ and }\quad \thetas \le \frac{\sqrt{n}}{ \mu_d }  \cdot 
	\frac{\dist(0,\calX^\star)}{\dist(0,V_p)} \ .
\end{equation}
This allows us to provide an alternative expression for $\calL$ as follows.

\begin{corollary}
	When Assumption \ref{assump:general} holds, and both the primal and dual problems in \eqref{pro: primal dual reformulated LP} have unique optimal solutions, then the expression for $\calL$ in \eqref{eq easy L} can be replaced by:
	$$
	\condL := 8.5 \kappa (\sqrt{n}+ 1) \left(
	\frac{1}{\mu_p} + \frac{1}{\mu_d} \right)
	\Bigg(  
	 \frac{1}{ \mu_p }  \cdot \frac{\dist(0,\calS^\star)}{\dist(0,V_d)} +\frac{1}{ \mu_d }  \cdot 
	\frac{\dist(0,\calX^\star)}{\dist(0,V_p)} 
	\Bigg) \ ,
	$$
	and the expression for $\calL$ in \eqref{eq smart L}  can be replaced by
	$$
		\condL  := 16 \kappa (\sqrt{n}+1) \left(
		\frac{1}{ \mu_p^2 }  \cdot 
		\frac{\dist(0,\calS^\star)}{\dist(0,V_d)}  +\frac{1}{ \mu_d^2 }  \cdot 
		\frac{\dist(0,\calX^\star)}{\dist(0,V_p)}
		\right)  \ .
	$$
\end{corollary}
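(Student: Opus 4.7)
The plan is to derive both replacement expressions by direct substitution, using Theorem \ref{thm:main} and Corollary \ref{cor bound thetas} to eliminate $\thetax$ and $\thetas$ from \eqref{eq easy L} and \eqref{eq smart L}, together with two elementary auxiliary bounds. First, because $\calX^\star = \{x^\star\}$ and $\calS^\star = \{s^\star\}$ are singletons, the diameter terms vanish and Theorem \ref{thm:main} and Corollary \ref{cor bound thetas} reduce to
\begin{equation*}
\thetax \ \le \ \frac{\sqrt{n}}{\mu_p}\cdot \frac{\dist(0,\calS^\star)}{\dist(0,V_d)}
\quad\text{and}\quad
\thetas \ \le \ \frac{\sqrt{n}}{\mu_d}\cdot \frac{\dist(0,\calX^\star)}{\dist(0,V_p)} \ ,
\end{equation*}
which is exactly what one wants to plug into the $\calL$ formulas.

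Next I would establish the two auxiliary facts. (i) $\mu_p \le 1$ and $\mu_d \le 1$: apply Cauchy--Schwarz to the equivalent expression in \eqref{eq:equivalent_mu_p}, namely $c^\top x - f^\star = c^\top(x - x^\star) \le \|c\|\cdot\|x - x^\star\|$, and take the infimum over $x^\star \in \calX^\star$; this gives $\frac{c^\top x - f^\star}{\|c\|\cdot \dist(x,\calX^\star)} \le 1$, and symmetrically for $\mu_d$. (ii) $\dist(c,\calS^\star) \le \dist(0,\calS^\star)$: since $c \in \linVp$ and, for any $s \in V_d$, $s - c \in \linVd$, orthogonality of $\linVp$ and $\linVd$ yields the Pythagorean identity $\|s\|^2 = \|s - c\|^2 + \|c\|^2$, so $\|s - c\| \le \|s\|$; taking the minimum over $s \in \calS^\star$ gives the claim.

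With these in hand, the rest is algebra. For the standard step-size $\calL$ in \eqref{eq easy L}, I would group the four terms in the right factor into a ``primal'' pair $\thetas + \frac{\dist(0,\calX^\star)}{\dist(0,V_p)}$ and a ``dual'' pair $\thetax + \frac{\dist(c,\calS^\star)}{\dist(0,V_d)}$. Bounding $\thetax$ by the displayed inequality, using $\dist(c,\calS^\star) \le \dist(0,\calS^\star)$, and then absorbing the extra $1$ via $1 \le 1/\mu_p$ collapses the dual pair to $\frac{\sqrt{n}+1}{\mu_p}\cdot\frac{\dist(0,\calS^\star)}{\dist(0,V_d)}$; the primal pair is handled symmetrically. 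Pulling $(\sqrt{n}+1)$ out of the outer parenthesis yields the first replacement. For the optimized-step $\calL$ in \eqref{eq smart L}, the same logic applies term-by-term: $\frac{\thetax}{\mu_p} + \frac{\dist(c,\calS^\star)}{\mu_p\cdot\dist(0,V_d)}$ is bounded by $\frac{\sqrt{n}+1}{\mu_p^2}\cdot\frac{\dist(0,\calS^\star)}{\dist(0,V_d)}$ after one more use of $1/\mu_p \le 1/\mu_p^2$, and analogously for the dual half, producing the second replacement.

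There is no real conceptual obstacle here; the proof is essentially bookkeeping once the two auxiliary inequalities $\mu_p,\mu_d \le 1$ and $\dist(c,\calS^\star) \le \dist(0,\calS^\star)$ are in place. The only point that requires a moment of care is making sure that the extra additive ``$1$'' from collapsing $\sqrt{n}/\mu_p + 1$ into $(\sqrt{n}+1)/\mu_p$ (or the $\mu_p^2$ analogue) is justified, which is precisely what $\mu_p \le 1$ supplies.
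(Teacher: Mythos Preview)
Your proposal is correct and follows the same substitution approach the paper indicates (the paper states the corollary without a detailed proof, merely noting that the bounds on $\thetax$ and $\thetas$ ``allow'' the replacement). You go further by making explicit the two auxiliary inequalities $\mu_p,\mu_d \le 1$ and $\dist(c,\calS^\star)\le\dist(0,\calS^\star)$ that the paper leaves entirely implicit; both are needed to absorb the constant terms into the $(\sqrt{n}+1)$ factor, and your justifications for them (Cauchy--Schwarz for the first, the Pythagorean decomposition $\|s\|^2=\|s-c\|^2+\|c\|^2$ from $c\in\linVp\perp\linVd\ni s-c$ for the second) are sound.
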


This allows for a worst-case complexity analysis of restarted PDHG without the direct use of \oneratio.

\bibliographystyle{plain}
\bibliography{reference}

\end{document}